\begin{document}

\newtheorem{theorem}{Theorem}[section]
\newtheorem{corollary}[theorem]{Corollary}
\newtheorem{lemma}[theorem]{Lemma}
\newtheorem{proposition}[theorem]{Proposition}
\newtheorem{conjecture}[theorem]{Conjecture}
\newtheorem{commento}[theorem]{Comment}
\newtheorem{definition}[theorem]{Definition}
\newtheorem{problem}[theorem]{Problem}
\newtheorem{remark}[theorem]{Remark}
\newtheorem{remarks}[theorem]{Remarks}
\newtheorem{example}[theorem]{Example}

\newcommand{\Nb}{{\mathbb{N}}}
\newcommand{\Rb}{{\mathbb{R}}}
\newcommand{\Tb}{{\mathbb{T}}}
\newcommand{\Zb}{{\mathbb{Z}}}
\newcommand{\Cb}{{\mathbb{C}}}

\newcommand{\Ef}{\mathfrak E}
\newcommand{\Gf}{\mathfrak G}
\newcommand{\iGf}{\mathfrak I\mathfrak G}
\newcommand{\Hf}{\mathfrak H}
\newcommand{\Kf}{\mathfrak K}
\newcommand{\Lf}{\mathfrak L}
\newcommand{\Af}{\mathfrak A}
\newcommand{\Bf}{\mathfrak B}
\newcommand{\Uf}{\mathfrak U}
\newcommand{\ff}{\mathfrak f}
\newcommand{\hf}{\mathfrak h}
\newcommand{\Xf}{\mathfrak X}
\newcommand{\Tf}{\mathfrak T}

\def\A{{\mathcal A}}
\def\B{{\mathcal B}}
\def\C{{\mathcal C}}
\def\D{{\mathcal D}}
\def\F{{\mathcal F}}
\def\G{{\mathcal G}}
\def\H{{\mathcal H}}
\def\J{{\mathcal J}}
\def\K{{\mathcal K}}
\def\LL{{\mathcal L}}
\def\N{{\mathcal N}}
\def\M{{\mathcal M}}
\def\N{{\mathcal N}}
\def\OO{{\mathcal O}}
\def\P{{\mathcal P}}
\def\SS{{\mathcal S}}
\def\T{{\mathcal T}}
\def\U{{\mathcal U}}
\def\W{{\mathcal W}}
\def\Z{{\mathcal Z}}

\def\span{\operatorname{span}}
\def\Ad{\operatorname{Ad}}
\def\ad{\operatorname{Ad}}
\def\tr{\operatorname{tr}}
\def\id{\operatorname{id}}
\def\en{\operatorname{End}}
\def\aut{\operatorname{Aut}}
\def\Out{\operatorname{Out}}
\def\per{\operatorname{Per}(X_n)}
\def\la{\langle}
\def\ra{\rangle}
\def\act{\curvearrowright}
\def\om{\overline{\Omega}_2}
\def\bm{\partial\Omega_2}
\def\pk{{\mathcal P}(\D_2^k)}
\def\pd{{\mathcal P}(\D_2)}
\def\sp{\Sigma_2\times\Sigma_2^*}
\def\spk{\Sigma_2^k\times\Sigma_2^k} 
\def\spr{\Sigma_2^r\times\Sigma_2^r} 
\def\one{{\mathbf 1}}

\title{The Action of the Thompson Group $F$ on \\ Infinite Trees}

\author{Jeong Hee Hong\,  
and Wojciech Szyma{\'n}ski\footnote{Supported by the DFF-Research Project 2, `Automorphisms and invariants of operator algebras', 
Nr. 7014--00145B (2017--2022).}}

\date{\small 10 July 2022}

\maketitle

\renewcommand{\sectionmark}[1]{}

\vspace{7mm}
\begin{abstract}
We construct an action of the Thompson group $F$ on a compact space built from 
pairs of infinite, binary rooted trees. The action arises as an $F$-equivariant compactification 
of the action of $F$ by translations on one of its homogeneous spaces, $F/H_2$, corresponding to a certain subgroup $H_2$ of $F$. The representation 
of $F$ on the Hilbert space $\ell^2(F/H_2)$ is faithful on the complex group algebra $\Cb[F]$. 
\end{abstract}

\vfill\noindent {\bf MSC 2020}: 20F65, 22D25

\vspace{3mm}
\noindent {\bf Keywords}: Thompson's group $F$, infinite trees, equivariant compactification
\newpage

%%%%%%%%%%%%%%%%%%%%%%%%%%%%%%%%%%%%%%%%%%%%%%%%%
%%%%%%%%%%%%%%%%%%%%%%%%%%%%%%%%%%%%%%%%%%%%%%%%%

\section{Introduction and preliminaries}

\subsection{Introduction}

Groups naturally manifest themselves through actions on geometric objects. In this spirit, 
increasing attention has been given recently to construction and analysis of various actions 
of the Thompson groups $F$, $T$ and $V$. For example, see Farley's proof of the Haagerup property for the Thompson groups, 
\cite{F}, Jones' construction of representations of groups $F$ and $T$ from planar algebras, \cite{J}, or 
Garncarek's construction of an analog of the principal series for groups $F$ and $T$, \cite{Garn}. 
The main purpose of this paper is to define an action of the Thompson group $F$ on a compact subspace 
of the topological space of inifinite, binary rooted trees. 

Although original definition of the Thompson groups involves mappings of the interval and the circle, see \cite{Hig} and 
\cite{CFP}, we like to think of these groups as sitting inside the unitary group of the Cuntz algebra $\OO_2$, see \cite{B},  
\cite{N}, and section 1.4 below. This point of view has been already exploited by Haagerup and Olesen in their work on the failure of 
inner amenability for groups $T$ and $V$, \cite{HO}. It is this connection between the Thompson groups and the Cuntz algebra 
$\OO_2$ that motivates the construction we produce in the present paper.  

We start with a subgroup $H_2$ of $F$, see section 2 below, and consider the action of $F$ on $F/H_2$ by translations. 
We note that the representation of $F$ on the Hilbert space $\ell^2(F/H_2)$ is faithful on the group algebra $\Cb[F]$.
The coset space $F/H_2$ is naturally identified with collection $\Omega_2$ of certain projections in the diagonal 
maximal abelian subalgebra (abbreviated MASA) $\D_2$ of the 
Cuntz algebra $\OO_2$. Which projections $p$ from $\D_2$ belong to $\Omega_2$ depends only on the value $\tau(p)$, where 
$\tau$ is the canonical trace  on the core uniformly hyperfinite (abbreviated UHF) subalgebra $\F_2$ of $\OO_2$, 
see Theorem \ref{elementsofOmega2} below. 

We use the following strategy. 
Set $\Omega_2$ has a natural embedding into the space of pairs of all subtrees of an infinite, binary rooted 
tree. Then the action of $F$ on $\Omega_2$ extends by continuity to an action of $F$ on the closure $\om$. 
$\Omega_2$ is a discrete and open subspace in $\om$. Its removal leads to an action of $F$ on the topological 
boundary $\bm=\om\setminus\Omega_2$, 
a space homeomorphic to the Cantor space. It turns out that $\bm$ contains a point globally fixed by $F$, somewhat 
analogously to a result from \cite{F08}. 

Our method of construction of the action $F\act\bm$ in the present paper seems quite natural. It is worth pointing out that 
the standard action of $F$ on the unit interval arises in a similar way, as an $F$ equivariant compactification of the action of 
$F$ on its homogeneous space. Indeed, in this case one may take a subgroup of those functions from interval $[0,1]$ to itself which 
fix one interior dyadic rational.  Also, since elements of group $F$ themselves may be viewed as pairs of (finite) trees, 
it is intuitively to be expected that $F$ may act on pairs of infinite trees. It should be noted that infinite trees have already appeared 
in the literature in the context of actions of group $F$, \cite{GS}. 

%%%%%%%%%%%%%%%%%%%%%%%%%%%%%%%%%%%%%%%%%%%%%%%%%%%%

\subsection{The Thompson group $F$}

The Thompson group $F$ is the group of order preserving piecewise linear homeomorphisms of the closed 
interval $[0,1]$ onto itself that are differentiable except finitely many dyadic rationals and such that all slopes 
are integer powers of $2$. This group has a presentation 
\begin{equation}\label{Finfgenerators}
F=\langle x_0,x_1,\ldots,x_n,\ldots \;|\; x_j x_i=x_i x_{j+1}, \forall i<j\rangle. 
\end{equation}
Remarkably, group $F$ admits a finite presentation as well
\begin{equation}\label{Ffinitegenerators}
F=\langle A,B \;|\; [AB^{-1},A^{-1}BA]=1, \; [AB^{-1}, A^{-2}BA^2]=1\rangle, 
\end{equation}
see \cite{CFP} and \cite{HP}. 
A connection between the two presentations, (\ref{Finfgenerators}) and (\ref{Ffinitegenerators}), 
 is obtained by setting $x_0=A$ and $x_n=A^{-(n-1)}BA^{n-1}$ 
for $n\geq 1$. As shown in \cite{BS}, every element $g$ of $F$ admits a unique normal form
\begin{equation}\label{normalform}
g=x_{j_1}x_{j_2}\cdots x_{j_k} x_{i_l}^{-1}\cdots x_{i_2}^{-1}x_{i_1}^{-1}, 
\end{equation}
where $j_k\neq i_l$, $j_1\leq j_2\leq\ldots\leq j_k$  and $i_1\leq i_2\leq\ldots\leq i_l$, and if both $x_m$ and 
$x_{m}^{-1}$ appear for some $m$, then so does either $x_{m+1}$ or  $x_{m+1}^{-1}$. 
The commutator subgroup $[F,F]$ of $F$ is simple, and it is the smallest proper normal subgroup of $F$. 
An element $f$ of $F$ belongs to $[F,F]$ if and only if the slopes of the graph of $f$ at both $0$ and $1$ are equal to $1$. 
We have $F/[F,F]\cong\Zb^2$. 
For a good introduction to the Thompson groups $F$, $T$ and $V$, we refer the reader to \cite{CFP} and \cite{BB}. 

%%%%%%%%%%%%%%%%%%%%%%%%%%%%%%%%%%%%%%%%%%%%%%%%%

\subsection{The Cuntz algebra $\OO_n$} 

If $n$ is an integer greater than 1, then the Cuntz algebra $\OO_n$ is a unital, simple,
purely infinite $C^*$-algebra generated by $n$ isometries $S_1, \ldots, S_n$ satisfying
$\sum_{i=1}^n S_i S_i^* = 1$, see \cite{Cun}. We denote by $W_n^k$ the set of $k$-tuples $\mu = \mu_1\ldots\mu_k$
with $\mu_m \in \{1,\ldots,n\}$, and by $W_n$ the union $\cup_{k=0}^\infty W_n^k$,
where $W_n^0 = \{\emptyset\}$. We call elements of $W_n$ multi-indices.
If $\mu \in W_n^k$ then $|\mu| = k$ is the length of $\mu$. 
If $\mu = \mu_1\ldots\mu_k \in W_n$ then $S_\mu = S_{\mu_1} \ldots S_{\mu_k}$
($S_\emptyset = 1$ by convention) is an isometry with range projection $P_\mu=S_\mu S_\mu^*$. 
Every word in $\{S_i, S_i^* \ | \ i = 1,\ldots,n\}$ can be uniquely expressed as
$S_\mu S_\nu^*$, for $\mu, \nu \in W_n$ \cite[Lemma 1.3]{Cun}.
For $\mu,\nu\in W_n$ we write $\mu\prec\nu$ if $\mu$ preceeds $\nu$ in the natural 
lexicographic order. 

We denote by $\F_n^k$ the $C^*$-subalgebra of $\OO_n$ spanned by all words of the form
$S_\mu S_\nu^*$, $\mu, \nu \in W_n^k$, which is isomorphic to the
matrix algebra $M_{n^k}({\mathbb C})$. The norm closure $\F_n$ of
$\cup_{k=0}^\infty \F_n^k$ is the UHF-algebra of type $n^\infty$,
called the core UHF-subalgebra of $\OO_n$, \cite{Cun}. We denote by $\tau$ the 
unique normalized trace on $\F_n$. The core UHF-subalgebra $\F_n$ is the fixed-point 
algebra for the gauge action $\gamma:U(1)\to\aut(\OO_n)$, such that $\gamma_z(S_j)=zS_j$ for 
$z\in U(1)$ and $j=1,\ldots,n$. There exists a faithful conditional expectation $E$ from 
$\OO_n$ onto $\F_n$, given by averaging with respect to the normalized Haar measure.  
Also, for each $m\in\Zb$ we denote by $\OO_n^{(m)}$ the corresponding spectral subspace for 
$\gamma$ in $\OO_n$. Thus, in particular, $\OO_n^{(0)}=\F_n$.  

The $C^*$-subalgebra $\D_n$ of $\OO_n$ generated by projections $P_\mu$, $\mu\in W_n$, is a 
MASA (maximal abelian subalgebra) in $\OO_n$. 
The spectrum of $\D_n$ is naturally identified with $X_n$ --- the full one-sided $n$-shift space. 
That is, $X_n$ is the cartesian product of infinitely many copies of the finite set $\{1,\ldots,n \}$, indexed by the natural numbers $\Nb$.
We denote $\D_n^k:=\D_n\cap\F_n^k$, for $0\leq k<\infty$. 

In what follows, we will consider elements of $\OO_n$ of the form $w=\sum_{(\alpha,\beta)\in\J}
S_\alpha S_\beta^*$, where $\J$ is a finite collection  of pairs $(\alpha,\beta)$ of words $\alpha,\beta$ in $W_n$. 
Of course, such a presentation (if it exists) is not unique. 
We denote $\J_1=\{\alpha\in W_n:\exists (\alpha,\beta)\in\J\}$ and 
$\J_2=\{\beta\in W_n:\exists (\alpha,\beta)\in\J\}$. 
In particular, we consider the group $\SS_n$  of those unitaries in $\OO_n$ which can be written
as finite sums of words, i.e. in the form $u=\sum_{(\alpha,\beta)\in\J}S_\alpha S_\beta^*$ for some finite set $\J\subseteq W_n\times W_n$.  
Such a sum is unitary if and only if $\sum_{\alpha\in\J_1} P_\alpha = 1 = \sum_{\beta\in\J_2} P_\beta$. 
We note that $\SS_n$ is a subgroup of the normalizer of $\D_n$ in $\OO_n$. An element 
$u=\sum_{(\mu,\nu)\in\J}S_\mu S_\nu^*$ of $\SS_n$ may be represented by an oriented bipartite graph  
with (distinct) vertices labeled $\alpha$ and $\beta$, $(\alpha,\beta)\in\J$, and directed edges (arrows) 
from vertex $\beta$ to vertex $\alpha$. 
We will always arrange the vertices in two rows: top labelled by $\beta$'s and bottom labelled by 
$\alpha$'s. For example, $u=S_{21}S_{1}^* + S_{22}S_{21}^* + 
S_{1}S_{22}^*\in\SS_2$ is represented by the following oriented bipartite graph. 
\[ \beginpicture

\setcoordinatesystem units <0.7cm,0.7cm>
\setplotarea x from -5 to 5, y from -2 to 2
\put {$\bullet$} at -3 1
\put {$\bullet$} at -3 -1
\put {$\bullet$} at 0 1
\put {$\bullet$} at 3 1
\put {$\bullet$} at 0 -1
\put {$\bullet$} at 3 -1
\put {$\alpha$} at -4.5 -1
\put {$\beta$} at -4.5 1
\arrow <0.235cm> [0.2,0.6] from -2.1 0.4 to -1.8 0.2
\arrow <0.235cm> [0.2,0.6] from 1.8 -0.2 to 2.1 -0.4
\arrow <0.235cm> [0.2,0.6] from 0.3 0.1 to 0 0
\put{$1$} at -3 1.5
\put{$21$} at 0 1.5
\put{$22$} at 3 1.5
\put{$1$} at -3 -1.5
\put{$21$} at 0 -1.5
\put{$22$} at 3 -1.5
\setlinear
\plot -3 1  0 -1 /
\plot 0 1  3 -1 /
\plot 3 1  -3 -1 /

\endpicture \] 
We will also have each row organized in increasing lexicographic order. Thus, if $\beta_1$ 
precedes $\beta_2$, then two arrows, one from $\beta_1$ to $\alpha_1$ and 
the other from $\beta_2$ to $\alpha_2$, cross if and only if $\alpha_2$ precedes $\alpha_1$.  
If the graph of a unitary  $w=\sum_{(\alpha,\beta)\in\J}S_\alpha S_\beta^*$ contains no 
crossings, then we say $w$ is {\em order preserving}. With help of oriented bipartite graph presentation of unitaries, this order preserving 
propertry will be used in what follows to identify elements of the Thompson group $F$ inside the Cuntz algebra $\OO_2$. 

We note that for an order preserving unitary $w=\sum_{(\alpha,\beta)\in\J}S_\alpha S_\beta^*$, its 
inverse $w^*=\sum_{(\alpha,\beta)\in\J}S_\beta S_\alpha^*$ is order preserving as well. Moreover, let 
$u=\sum_{(\mu,\nu)\in\K}S_\mu S_\nu^*$ be another order preserving unitary. We can choose such 
representations of $u$ and $w$ that the lengths of words $\nu$ and $\alpha$ are all the same, that is 
$|\nu|=|\alpha|=k$ for some fixed $k$. Since $1=\sum_{|\kappa|=r}S_\kappa S_\kappa^*$ for each $r$, 
this can be achieved replacing terms $S_\alpha S_\beta^*$ by 
$\sum_{|\kappa|=k-|\alpha|}S_\alpha S_\kappa S_\kappa^*S_\beta^*$ and terms $S_\mu S_\nu^*$ by 
$\sum_{|\kappa|=k-|\nu|}S_\mu S_\kappa S_\kappa^*S_\nu^*$. Then we see that 
$$
uw = \sum_{(\mu,\nu)\in\K,(\alpha,\beta)\in\J, } S_\mu S_\nu^*S_\alpha S_\beta^* 
= \sum_{(\mu,\nu)\in\K, (\alpha,\beta)\in\J, \,\nu=\alpha} S_\mu S_\beta^*
$$
is order preserving as well. For example, consider $u=S_{11}S_1^* + S_{12}S_{21}^* + S_2S_{22}^*$ and 
$w=S_{111}S_{11}^* + S_{112}S_{121}^* + S_{12}S_{122}^* + S_{211}S_{21}^* + S_{212}S_{221}^* + S_{22}S_{222}^*$. 
We represent them as 
$$ \begin{aligned}
u & =  S_{1111}S_{111}^* +  S_{1112}S_{112}^* +  S_{1121}S_{121}^* +  S_{1122}S_{122}^* + 
S_{121}S_{211}^* + S_{122}S_{212}^* + S_{21}S_{221}^* \\
 &  + S_{22}S_{222}^* \\ 
w & =  S_{111}S_{11}^* + S_{112}S_{121}^* + S_{121}S_{1221}^* + S_{122}S_{1222}^* + 
  S_{211}S_{21}^* + S_{212}S_{221}^* + S_{221}S_{2221}^* \\ 
 & + S_{222}S_{2222}^*, 
\end{aligned} $$
and then see that 
$$ \begin{aligned}
uw & =  S_{1111}S_{11}^* + S_{1112}S_{121}^* + S_{1121}S_{1221}^* + S_{1122}S_{1222}^* + S_{121}S_{21}^*   
S_{122}S_{221}^* + S_{21}S_{2221}^* + \\
 & + S_{22}S_{2222}^*.
\end{aligned} $$

%%%%%%%%%%%%%%%%%%%%%%%%%%%%%%%%%%%%%%%%%%%%%%%%

\subsection{The representation of the Thompson groups in $\U(\OO_2)$}

As shown in \cite{B} and \cite{N}, Thompson's group $F$ has a natural faithful representation in the unitary 
group of $\OO_2$ by those  unitaries $u=\sum_{(\alpha,\beta)\in\J}S_\alpha S_\beta^*$ 
in $\SS_2$ that the association $\J_1\ni\alpha\mapsto\beta\in\J_2$ (with $(\alpha,\beta)\in\J)$ 
respects the lexicographic order on $W_2$. Thus, a unitary $u\in\SS_2$ belongs to $F$ if and only if 
it is order preserving. In particular, we have 
\[ \begin{aligned}
x_0 & = S_{11}S_1^* + S_{12}S_{21}^* + S_2S_{22}^*, \\
x_k & = 1-S_2^kS_2^{*k} + S_2^kx_0S_2^{*k}, \;\;\; \text{for } k\geq 1. 
\end{aligned} \]
The two generators $x_0$ and $x_1$ are represented by the following order preserving oriented bipartite graphs:
\[ \beginpicture

\setcoordinatesystem units <0.7cm,0.7cm>
\setplotarea x from -5 to 5, y from -2 to 2
\put {$x_0$} at -5 0
\put {$\bullet$} at -3 1
\put {$\bullet$} at -3 -1
\put {$\bullet$} at 0 1
\put {$\bullet$} at 3 1
\put {$\bullet$} at 0 -1
\put {$\bullet$} at 3 -1
\arrow <0.235cm> [0.2,0.6] from -3 0.2 to -3 -0.1
\arrow <0.235cm> [0.2,0.6] from 0 0.2 to 0 -0.1
\arrow <0.235cm> [0.2,0.6] from 3 0.2 to 3 -0.1
\put{$1$} at -3 1.5
\put{$21$} at 0 1.5
\put{$22$} at 3 1.5
\put{$11$} at -3 -1.5
\put{$12$} at 0 -1.5
\put{$2$} at 3 -1.5
\setlinear
\plot -3 1  -3 -1 /
\plot 0 1  0 -1 /
\plot 3 1  3 -1 /

\endpicture \] 
\[ \beginpicture

\setcoordinatesystem units <0.7cm,0.7cm>
\setplotarea x from -6.5 to 6, y from -2 to 2
\put {$x_1$} at -5 0
\put {$\bullet$} at -3 1
\put {$\bullet$} at -3 -1
\put {$\bullet$} at 0 1
\put {$\bullet$} at 3 1
\put {$\bullet$} at 0 -1
\put {$\bullet$} at 3 -1
\put {$\bullet$} at 6 1
\put {$\bullet$} at 6 -1
\arrow <0.235cm> [0.2,0.6] from -3 0.2 to -3 -0.1
\arrow <0.235cm> [0.2,0.6] from 0 0.2 to 0 -0.1
\arrow <0.235cm> [0.2,0.6] from 3 0.2 to 3 -0.1
\arrow <0.235cm> [0.2,0.6] from 6 0.2 to 6 -0.1
\put{$1$} at -3 1.5
\put{$21$} at 0 1.5
\put{$221$} at 3 1.5
\put{$1$} at -3 -1.5
\put{$211$} at 0 -1.5
\put{$212$} at 3 -1.5
\put{$222$} at 6 1.5
\put{$22$} at 6 -1.5
\setlinear
\plot -3 1  -3 -1 /
\plot 0 1  0 -1 /
\plot 3 1  3 -1 /
\plot 6 1  6 -1 /

\endpicture \] 
The subgroup of $\SS_2$ generated by $F$ and $S_{22}S_1^* + S_1S_{21}^* + S_{21}S_{22}^*$ 
is isomorphic to the Thompson group $T$, and consists of those unitaries 
$u=\sum_{(\alpha,\beta)\in\J}S_\alpha S_\beta^*$ 
in $\SS_2$ that the association $\J_1\ni\alpha\mapsto\beta\in\J_2$ (with $(\alpha,\beta)\in\J)$ 
respects the lexicographic order on $W_2$ up to a cyclic permutation. 
Finally, group $\SS_2$ itself is isomorphic to the Thompson group $V$. 

%%%%%%%%%%%%%%%%%%%%%%%%%%%%%%%%%%%%%%%%%%%%%%%%
%%%%%%%%%%%%%%%%%%%%%%%%%%%%%%%%%%%%%%%%%%%%%%%%

\section{The action of group $F$ on $F/H_2$}

We  consider a subgroup $H_2$ of $F$, defined as 
\begin{equation}\label{hn}
H_2:=\left\{\sum_{(\alpha,\beta)\in\J}S_\alpha S_\beta^* \in F \mid \forall (\alpha,\beta)\in\J, \;  
|\alpha|-|\beta| \in 2\Zb\right\}.
\end{equation}
We are interested in the action of group $F$ on $F/H_2$ by left translations. 

Let $f=\sum_{(\alpha,\beta)\in\J}S_\alpha S_\beta^*$ be in $F$. We set 
$$ \begin{aligned}
f_0 & := \sum_{|\alpha|-|\beta|\in2\Nb}S_\alpha S_\beta^*, \\
f_1 & := \sum_{|\alpha|-|\beta|\not\in2\Nb}S_\alpha S_\beta^*, 
\end{aligned} $$
with the summations above involving only $(\alpha,\beta)\in\J$. 
We denote by $[f]$ the coset in $F/H_2$ determined by $f\in F$. We have $[g]=[f]$ if and only if 
$f^{-1}g\in H_2$. Since 
$$
f^{-1}g = (f_0^* + f_1^*)(g_0 + g_1) = (f_0^*g_1 + f_1^*g_0) + (f_0^*g_0 + f_1^*g_1), 
$$
$f^{-1}g\in H_2$ iff $f_0^*g_1 + f_1^*g_0 = 0$, and clearly this is equivalent to $g_0g_0^*=f_0f_0^*$. That is, 
projection $f_0f_0^*$ determines uniquely the coset of $f$. Thus, denoting
\begin{equation}\label{omega}
\Omega_2:=\{f_0f_0^* \in\D_2 \mid f\in F\},
\end{equation}
we see that the coset space $F/H_2$ is naturally identified with $\Omega_2$. Furthermore, 
let $f,g\in F$ and denote $p:=g_og_0^*$. Then $(fg)_0 = f_0g_0 + f_1g_1$, and thus 
$(fg)_0(fg)_0^* = f_0 p f_0^* + f_1 (1-p) f_1^*$. Hence, under the above identification of 
$F/H_2$ with $\Omega_2$, the action of $F$ on $F/H_2$ corresponds to the following action of $F$ on $\Omega_2$:
\begin{equation}\label{actionFH2}
f\cdot p = f_0 p f_0^* + f_1 (1-p) f_1^*,  
\end{equation}
for $f\in F$ and $p\in\Omega_2$. In particular, we have 
$$ \begin{aligned}
x_0\cdot p & = (P_{11} - S_{11}(S_1^*pS_1)S_{11}^*) + S_{12}(S_{21}^*pS_{21})S_{12}^* 
                   + (P_2 - S_2(S_{22}^*pS_{22})S_2^*), \\
x_0^{-1}\cdot p & = (P_1 - S_1(S_{11}^*pS_{11})S_1^*) + S_{21}(S_{12}^*pS_{12})S_{21}^* 
                   + (P_{22} - S_{22}(S_2^*pS_2)S_{22}^*), \\
x_1\cdot p & = S_1(S_1^*pS_1)S_1^* + (P_{211} - S_{211}(S_{21}^*pS_{21})S_{211}^*) 
                   + S_{212}(S_{221}^*pS_{221})S_{212}^* \\
                & + (P_{22} - S_{22}(S_{222}^*pS_{222})S_{22}^*), \\
x_1^{-1}\cdot p & = S_1(S_1^*pS_1)S_1^* + (P_{21} - S_{21}(S_{211}^*pS_{211})S_{21}^*)  
                   + S_{221}(S_{212}^*pS_{212})S_{221}^* \\ 
                & + (P_{222} - S_{222}(S_{22}^*pS_{22})S_{222}^*). 
\end{aligned} $$  

The action of group $F$ on $\Omega_2$ by (\ref{actionFH2}) 
gives rise to a unitary representation $\pi_2$ of group $F$ on the Hilbert space $\ell^2(\Omega_2)$. 

\begin{proposition}\label{algebraicfaithfulness}
Representation $\pi_2$ is faithful on the group algebra $\Cb[F]$. 
\end{proposition}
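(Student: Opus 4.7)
Let $c=\sum_{g\in S}c_g g\in\Cb[F]$ be finitely supported.  The plan is to show that there exists $p\in\Omega_2$ whose $S$-orbit $\{g\cdot p : g\in S\}$ consists of pairwise distinct elements; then $\pi_2(c)\delta_p=\sum_g c_g\delta_{g\cdot p}$ is a nontrivial combination of distinct basis vectors, forcing $\pi_2(c)\neq 0$ whenever some $c_g\neq 0$.  Via the identification $\Omega_2\cong F/H_2$ with $p=f\cdot\one$, finding such $p$ is equivalent to: produce $f\in F$ satisfying $f^{-1}hf\notin H_2$ for every element $h$ of the finite set
\[
T:=\{g_1^{-1}g_2 : g_1\neq g_2,\; g_1,g_2\in S\}\subset F\setminus\{e\}.
\]

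To detect $H_2$-membership I would use the ``shift cocycle.''  For $h=\sum_{(\alpha,\beta)\in\J}S_\alpha S_\beta^*\in F$, define the locally constant function $\sigma_h:X_2\to\Zb$ by $\sigma_h(\beta\xi):=|\alpha|-|\beta|$ whenever $(\alpha,\beta)\in\J$.  The definition of $H_2$ states exactly that $h\in H_2$ iff $\sigma_h$ takes only even values, and the cocycle identity $\sigma_{gh}(\xi)=\sigma_g(h\xi)+\sigma_h(\xi)$ together with $\sigma_{f^{-1}}(\eta)=-\sigma_f(f^{-1}\eta)$ gives, setting $g:=f^{-1}$ and evaluating at $\xi=f^{-1}\eta$,
\[
\sigma_{f^{-1}hf}(f^{-1}\eta)\ \equiv\ \sigma_g(\eta)+\sigma_h(\eta)+\sigma_g(h\eta)\pmod 2 .
\]
Therefore, for each $h\in T$ it suffices to pick a point $\eta_h\in X_2$ with $h(\eta_h)\neq\eta_h$ (which exists because $h\neq e$ acts nontrivially on the Cantor space) and to choose $g$ so that the $\sigma_g$-parities at $\eta_h$ and $h(\eta_h)$ satisfy $\sigma_g(\eta_h)+\sigma_g(h\eta_h)\equiv 1+\sigma_h(\eta_h)\pmod 2$.

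Finally, I would realise these choices simultaneously for all $h\in T$ by a cylinder-by-cylinder construction of $g$.  A short induction, using that each open set $\{\xi\in X_2 : h\xi\neq\xi\}$ is uncountable, lets me select the $\eta_h$'s so that the $2|T|$ points $\{\eta_h, h(\eta_h)\}_{h\in T}$ are pairwise distinct; then enclose them in $2|T|$ pairwise disjoint cylinders $C_\mu:=\{\mu\xi : \xi\in X_2\}$.  On each such cylinder let $g$ act either as the identity (shift $0$) or as a local $F$-element of the form
\[
(1-P_\mu)+S_{\mu 11}S_{\mu 1}^*+S_{\mu 12}S_{\mu 21}^*+S_{\mu 2}S_{\mu 22}^*
\]
(producing an odd shift at the chosen sub-point of $C_\mu$), keeping $g$ equal to the identity off the union of the chosen cylinders.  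The $|T|$ parity constraints involve $2|T|$ independent binary choices and so can all be met, yielding $f=g^{-1}\in F$ with $f^{-1}hf\notin H_2$ for every $h\in T$.  The principal obstacle is precisely this simultaneous combinatorial book-keeping in the last step; it becomes routine thanks to the total disconnectedness of $X_2$ and the local flexibility of the Thompson group inside $\SS_2$.
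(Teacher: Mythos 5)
Your proposal is correct and follows essentially the same route as the paper: reduce faithfulness to finding $g\in F$ with $g^{-1}(f_i^{-1}f_j)g\notin H_2$ for all pairs, and build $g$ as a product of order-preserving elements supported on small pairwise disjoint cylinders that adjust the parity of the length shift at finitely many chosen points. The only difference is bookkeeping — you phrase the $H_2$-membership test via the slope cocycle $\sigma$ on $X_2$, where the paper uses projections $q_j$ with $q_jf_jq_j=0$ and the gauge spectral subspaces $\OO_2^{(m)}$ — but the underlying construction is the same.
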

\begin{proof}
At first, consider an element $f$ of $F$ and a projection $p$ in $\D_2$ such that $fp\neq p$. Then 
there exists a projection $q\leq p$ in $\D_2$ such that $qfq=0$. We define an element $g\in F$, 
as follows. If $fq$ belongs to the union of subspaces $\OO_2^{(m)}$ with all $m$ even, then we take $g$ 
to be an arbitrary element in $F\setminus H_2$ such that $g(1-q)=(1-q)g=1-q$. Otherwise we put $g=1$. 
In both cases, we have $g^{-1}fg\not\in H_2$. 

Now, we claim that for any elements $f_1,\ldots,f_k$ of $F$, all different from the identity, there is 
a $g\in F$ such that $g^{-1}f_j g\not\in H_2$ for all $j=1,\ldots,k$. Indeed, it is easily seen that there exist 
mutually orthogonal projections $q_1,\ldots,q_k$ in $\D_2$ such that $q_jf_jq_j=0$ and 
$f_jq_jf_j^*(\sum_{i\neq j}q_i)=0$ for all $j=1,\ldots,k$. For each $j$, let $g_j$ be an element of $F$ 
such that $g_j(1-q_j)=(1-q_j)g_j=1-q_j$ and $g_j^{-1}f_jg_j\not\in H_2$. Put $g:=g_1g_2\cdots g_k$. 
Then $g^{-1}f_j g\not\in H_2$ for all $j=1,\ldots,k$, as claimed. 

Now, let $f_1,\ldots,f_k$ be distinct elements of $F$. By the above argument, there exists a $g\in F$ such that 
$g^{-1}f_i^{-1}f_jg\not\in H_2$ for all $i\neq j$. Let $p=g\cdot 1$, an element of $\Omega_2$. 
Then $f_1\cdot p,\ldots,f_k\cdot p$ are distinct elements of $\Omega_2$. This immediately implies that 
$\pi_2(f_1),\ldots,\pi_2(f_k)$ are linearly independent, and hence representation $\pi$ is faithful. 
\end{proof}

\begin{proposition}\label{f0nonzero}
For each $f\in F$, we have $f_0\neq0$. 
\end{proposition}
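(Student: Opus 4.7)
The plan is to recast the vanishing of $f_0$ as a combinatorial statement about two finite binary subtrees, and then block it with a small mod $3$ invariant. I would fix the unique minimal representation $f=\sum_{(\alpha,\beta)\in\J}S_\alpha S_\beta^*$. The sets $\J_1$ and $\J_2$ are exactly the leaf sets of two finite rooted binary subtrees $T_\alpha$ and $T_\beta$ of the infinite binary tree over the alphabet $\{1,2\}$, and $\J_1\ni\alpha\mapsto\beta\in\J_2$ is the unique bijection preserving lexicographic order. Since the projections $P_\alpha$ for distinct $\alpha\in\J_1$ are mutually orthogonal, one has $f_0 f_0^*=\sum P_\alpha$ with the sum taken over those $\alpha$ whose partner $\beta$ satisfies $|\alpha|\equiv|\beta|\pmod 2$. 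Hence $f_0=0$ is equivalent to $|\alpha|\not\equiv|\beta|\pmod 2$ for every pair $(\alpha,\beta)\in\J$; ordering the leaves of $T_\alpha$ and $T_\beta$ left to right, this says the two parity sequences $(|\alpha_i|\bmod 2)_i$ and $(|\beta_i|\bmod 2)_i$ differ in every single position.

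Assuming this componentwise opposition for contradiction, I would introduce the invariant
$$ \chi(T):=\sum_{\ell \text{ leaf of } T}(-1)^{|\ell|} $$
associated with any finite rooted binary subtree $T$. Opposition of the parity sequences immediately yields $\chi(T_\alpha)=-\chi(T_\beta)$, so $\chi(T_\alpha)+\chi(T_\beta)=0$. The key claim is that $\chi(T)\equiv 1\pmod 3$ for every such $T$, which follows by induction on the size of $T$. For the trivial tree consisting of just the root as its only leaf, $\chi=1$. Otherwise $T$ splits at its root into subtrees $T_L$ and $T_R$ rooted at the two children; passing from $T_L$, $T_R$ to $T$ shifts every leaf depth up by one and thus flips its sign, so $\chi(T)=-\chi(T_L)-\chi(T_R)\equiv -1-1\equiv 1\pmod 3$. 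Consequently $\chi(T_\alpha)+\chi(T_\beta)\equiv 2\pmod 3$, which cannot equal $0$, and this contradiction forces $f_0\neq 0$.

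The main obstacle is spotting the mod $3$ invariant: neither a plain count of leaves by depth parity nor the partition identities $\sum 2^{-|\alpha|}=\sum 2^{-|\beta|}=1$ are by themselves strong enough to rule out the componentwise opposition. Once one thinks to weight leaves by $(-1)^{|\ell|}$, however, the recursion $\chi(T)=-\chi(T_L)-\chi(T_R)$ and the stability of $1\pmod 3$ under $(-1)+(-1)$ are immediate, and everything else is routine bookkeeping with the tree-pair representation of $f$.
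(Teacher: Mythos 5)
Your proof is correct, and it rests on the same arithmetic obstruction as the paper's: modulo $3$ one has $2^{\text{even}}\equiv 1$ and $2^{\text{odd}}\equiv -1$, so a total parity mismatch between the two leaf sets is impossible. The difference lies in how the key congruence is established. The paper normalizes the representation so that all $\alpha$'s (and all $\nu$'s) have a common length $k$, applies the rescaled trace $2^k\tau$ to the two partition identities $\sum P_\alpha+\sum P_\mu=1=\sum P_\beta+\sum P_\nu$, and adds the resulting equations to obtain $2^{k+1}=\sum_i r_i(2^{2i+1}+1)+\sum_j l_j(2^{2j+1}+1)$, whose right-hand side is divisible by $3$ while the left is not. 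You instead isolate the invariant $\chi(T)=\sum_{\ell}(-1)^{|\ell|}$ over the leaves and prove $\chi(T)\equiv 1\pmod 3$ by structural induction; since $(-1)\equiv 2\pmod 3$, this is precisely the reduction mod $3$ of the Kraft equality $\sum_\ell 2^{-|\ell|}=1$ that the trace computation encodes, so the two lemmas are equivalent in content. Your route buys a cleaner statement: it avoids the length normalization and the bookkeeping with the counts $r_i,l_j$, and the recursion $\chi(T)=-\chi(T_L)-\chi(T_R)$ makes the role of the tree structure transparent; the paper's version keeps the argument inside the $C^*$-algebraic framework (the trace $\tau$ on $\F_2$) it has already set up, which fits the surrounding development. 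The only point worth making explicit in your write-up is that $\J_1$ and $\J_2$ are \emph{complete} prefix codes (every internal vertex of $T_\alpha$, $T_\beta$ has both children), which is exactly what $\sum_{\alpha\in\J_1}P_\alpha=1=\sum_{\beta\in\J_2}P_\beta$ guarantees and what your induction step uses.
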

\begin{proof}
Suppose, by way of contradiction, that $f\in F$ and $f_0=0$. Write
\begin{equation}\label{fzero}
f= \sum_{|\alpha|>|\beta|}S_\alpha S_\beta^* + \sum_{|\mu|<|\nu|}S_\mu S_\nu^*. 
\end{equation}
We can arrange such a representation of $f$ as in (\ref{fzero}) so that for some $k\in\Nb$ we have 
$|\alpha|=k=|\nu|$ for all $(\alpha,\beta)$ and all $(\mu,\nu)$. Now, denote 
$$ \begin{aligned}
r_j & = \#\{(\alpha,\beta)\in\J \mid |\alpha|-|\beta|=2j+1\}, \;\; j=0,1,\ldots,R, \\
l_j & = \#\{(\mu,\nu) \in\J\mid |\nu|-|\mu|=2j+1\}, \;\; j=0,1,\ldots,L. 
\end{aligned} $$
We have $1=\sum P_\alpha + \sum P_\mu = \sum P_\beta + \sum P_\nu$. Applying the rescaled trace 
$2^k\tau$, we get
$$ 2^k  = \sum_{i=0}^R r_i + \sum_{j=0}^L l_j 2^{2j+1} = \sum_{i=0}^R r_i 2^{2i+1} + \sum_{j=0}^L l_j . $$
Adding these together, we obtain the identity 
\begin{equation}\label{2kequality}
2^{k+1} = \sum_{i=0}^R r_i(2^{2i+1}+1) + \sum_{j=0}^L l_j(2^{2j+1}+1). 
\end{equation}
This, however, is a contradiction, as the RHS of (\ref{2kequality}) is divisible by $3$, while the LHS is not. 
\end{proof}

Now, we are ready to give a detailed description of space $\Omega_2$, defined in (\ref{omega}). 

\begin{theorem}\label{elementsofOmega2}
$\Omega_2$ consists of all those projections $p\in\D_2$ that $\tau(p)=k/2^{2m+1}$, where $m\in\Nb$ and 
$k\in\Nb$ are such that $0< k \leq 2^{2m+1}$ and $k-2$ is divisible by $3$.  
\end{theorem}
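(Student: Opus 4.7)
The plan is to prove both inclusions: every trace $\tau(f_0 f_0^*)$ has the prescribed form (forward direction) and every projection with such a trace is realized (reverse direction).

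\medskip

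For the forward direction, fix $f = \sum_i S_{\alpha_i} S_{\beta_i}^* \in F$ in reduced form, set $d_i := |\alpha_i| - |\beta_i|$, $T_0 := \tau(f_0 f_0^*) = \sum_{d_i \text{ even}} 2^{-|\alpha_i|}$, and $T_1 := 1 - T_0$. Then $T_0 - T_1 = \sum_i (-1)^{d_i} 2^{-|\alpha_i|}$. Choose $N$ even with $N \geq \max_i |\alpha_i|$; then $2^N(T_0 - T_1) = \sum_i (-1)^{d_i} 2^{N - |\alpha_i|}$ is an integer. Reducing modulo $3$ via $2 \equiv -1$, the $i$-th summand becomes $(-1)^{d_i + N - |\alpha_i|} = (-1)^{N - |\beta_i|}$. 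The parallel reduction of $1 = \sum_i 2^{-|\beta_i|}$ (multiply through by $2^M$ for $M \geq \max_i |\beta_i|$ and take mod $3$) yields $\sum_i (-1)^{|\beta_i|} \equiv 1 \pmod 3$. Therefore $2^N(T_0 - T_1) \equiv (-1)^N \pmod 3$, and $2^{N+1} T_0 = 2^N + 2^N(T_0-T_1) \equiv 2^{N+1} \pmod 3$. Setting $m := N/2$ and $k := 2^{2m+1} T_0$ yields a positive integer (positivity by Proposition~\ref{f0nonzero}) with $0 < k \leq 2^{2m+1}$ and $k \equiv 2^{2m+1} \equiv 2 \pmod 3$, as required.

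\medskip

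For the reverse direction, given $p$ with $\tau(p) = k/2^{2m+1}$ and $k \equiv 2 \pmod 3$, set $L := 2m+1$ and refine $p = \sum_{\mu \in I'} P_\mu$ with $|\mu| = L$ for all $\mu \in I'$, so $|I'| = k$. Let $J' := W_2^L \setminus I'$; since $L$ is odd, $2^L \equiv 2 \pmod 3$, whence $|J'| = 2^L - k \equiv 0 \pmod 3$. I construct $f = \sum_{\mu \in W_2^L} S_\mu S_{\beta(\mu)}^* \in F$, where the order-preserving correspondence $\mu \mapsto \beta(\mu)$ is chosen so that $|\beta(\mu)|$ is odd for $\mu \in I'$ and even for $\mu \in J'$. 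The baseline $\beta(\mu) = \mu$ handles $I'$ directly. To handle $J'$, apply the local tree move: any three consecutive length-$L$ $\beta$-leaves of the form $(\mu 1, \mu 2, \mu' 1)$ (or $(\mu 2, \mu' 1, \mu' 2)$) may be replaced by one length-$(L-1)$ leaf (merging the sibling pair) together with two length-$(L+1)$ leaves (splitting the adjacent one), yielding three even-length $\beta$'s of total mass $3/2^L$. The divisibility $|J'| \equiv 0 \pmod 3$ licenses iterative application of these moves; when needed, refine $L$ to $L+2$ to expose further consecutive $J'$-triples, which preserves all the congruences.

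\medskip

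The forward direction is essentially a short modular calculation hinging on the identity $\sum_i (-1)^{|\beta_i|} \equiv 1 \pmod 3$ (itself a consequence of $\sum_i 2^{-|\beta_i|} = 1$). The main obstacle is the reverse direction: one must verify that the local triple-replacements can always be assembled into a globally valid antichain $\{\beta(\mu)\}$ for arbitrary $I'$, not merely for favorable configurations. The mod-$3$ divisibility of $|J'|$ is necessary (dual to the forward direction) and suffices in principle, but establishing sufficiency in full generality---possibly through induction on $|J'|/3$ or through repeated refinement of $L$ combined with a careful selection of which triples to treat first---is where the combinatorial weight of the theorem resides.
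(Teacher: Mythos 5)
Your forward direction is correct and genuinely different from the paper's: the paper obtains the congruence $k\equiv 2\pmod 3$ by induction on the length of $f$ as a word in $x_0^{\pm1},x_1^{\pm1}$, checking that each generator changes $2^{2m+1}\tau(p)$ by a multiple of $3$, whereas you extract it in one stroke from the two partition-of-unity identities $\sum_i 2^{-|\alpha_i|}=1=\sum_i 2^{-|\beta_i|}$ reduced modulo $3$. Together with Proposition \ref{f0nonzero} (for $k>0$) that half is complete, and arguably more transparent than the induction.

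The reverse direction, however, has a genuine gap, which you partly acknowledge. Your only tool is local: it applies to three \emph{consecutive} vertices of $W_2^{L}$ (which indeed always contain a sibling pair), and it requires all three to lie in $J'$. For a general $J'$ with $|J'|\equiv 0 \pmod 3$ the elements are scattered, and refining $L$ does not repair this: passing from $L$ to $L+2$ turns each isolated element of $J'$ into four consecutive ones, of which your move consumes three and leaves one, again isolated and separated from all other leftovers by refined $I'$-vertices; the number of unhandled isolated vertices is therefore invariant under refine-and-move, and the procedure never terminates. Moreover an isolated $\mu\in J'$ cannot be fixed by any purely local adjustment in your scheme: if $\beta$ is the identity on the two neighbours of $\mu$, order preservation together with the partition-of-unity constraint forces $P_{\beta(\mu)}=P_\mu$, hence $|\beta(\mu)|=L$, which is odd. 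The missing ingredient is a mechanism for transporting mass \emph{across} the intervening $I'$-blocks, i.e.\ degree-zero (length-parity-preserving) shifts of those blocks. This is precisely the role of the partial isometries $u_2$ and $u_4$ in the paper's construction, which treats an arbitrary triple $\alpha_1\prec\alpha_2\prec\alpha_3$ with arbitrary gaps $\mu_1,\dots,\mu_k$ and $\nu_1,\dots,\nu_l$ by splitting $\alpha_1$, shifting the first gap block by one slot, re-seating $\alpha_2$, shifting the second gap block, and merging $\alpha_3$ with its neighbour; the resulting $f\in F$ deletes the triple from the support of $p$, and one recurses on $|J|$. Until you supply such a shifting device (or an equivalent induction), the reverse inclusion is not proved.
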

\begin{proof}
At first, we show that every projection $p\in\Omega_2$ satisfies the condition of the theorem. Let $f\in F$ be written 
as a word $w_1\cdots w_r$, with each $w_j\in\{x_0,x_0^{-1},x_1,x_1^{-1}\}$. We will show that projection 
$f_0f_0^*$ is of the required form, proceeding by induction on $r$. The case $r=0$ corresponding to $f=f_0
f_0^*=1$ is clear. For the inductive step, suppose that $p=(w_2\cdots w_r)_0(w_2\cdots w_r)_0^*$ has the 
required form with integer $m$. We have $f_0f_0^*=w_1\cdot p$. If $w_1=x_0$, then 
$$ \tau(w_1\cdot p) =  \frac{1}{2}\left( \frac{1}{2} - \tau(pP_1) \right) + \tau(pP_{21}) 
   + 2\left( \frac{1}{4} - \tau(pP_{22}) \right). $$
Thus we have 
$$ 2^{2m+1}(\tau(w_1\cdot p) - \tau(p)) = 2^{2m+1}3\left( \frac{1}{4} - \frac{\tau(pP_1)}{2} 
   - \tau(pP_{22}) \right). $$
Since the RHS of this identity is divisible by $3$, so is the LHS and the inductive step follows in this case. 
The remaining three cases when $w_1$ equals $x_0^{-1}$, $x_1$ or $x_1^{-1}$ are handled similarly. 

Now, we show that all projections satisfying the condition of the theorem appear in $\Omega_2$. So let $p=\sum_{\alpha\in J}P_\alpha$, 
where $J$ is a subset of $W_2^{2m+1}$ whose number of elements equals 
$2$ modulo $3$. Assume first that $J$ has at least $5$ elements. Let $\alpha_1\prec\alpha_2\prec\alpha_3$ be 
elements of $J$, and let $\mu_1,\ldots,\mu_k$, $\nu_1,\ldots,\nu_l$ 
be such elements of $W_2^{2m+1}$ that $[\alpha_1,\mu_1,\ldots,\mu_k,\alpha_2,\nu_1,\ldots,\nu_l,
\alpha_3]$ is a $\prec$ interval in $W_2^{2m+1}$. Then there exist order preserving partial isometries 
$u_1,\ldots,u_5$ such that: 
$$ \begin{aligned}
(i & ) \;\;\;u_1^*u_1=P_{\alpha_1}, \;\; u_1u_1^*=P_{\alpha_1 1}, \;\; u_1\in\OO_2^{(1)}, \\
(ii & ) \;\;\; u_2^*u_2=\sum_{j=1}^k P_{\mu_j}, \;\; u_2u_2^*=P_{\alpha_1 2} + \sum_{j=1}^{k-1}
P_{\mu_j} + P_{\mu_k 1},  \;\; u_2\in\OO_2^{(0)}, \\
(iii & ) \;\;\; u_3^*u_3 = P_{\alpha_2}, \;\; u_3u_3^* = P_{\mu_k 2}, \;\; u_3\in\OO_2^{(1)}, \\
( iv & ) \;\;\; u_4^*u_4 = \sum_{i=1}^l P_{\nu_i}, \;\; u_4u_4^* = P_{\alpha_2} + \sum_{i=1}^{l-1}
P_{\nu_i}, \;\; u_4\in\OO_2^{(0)}, \\
( v & ) \;\;\; u_5^*u_5 = P_{\alpha_3}, \;\; u_5u_5^* = P_{\nu_l} + P_{\alpha_3}, \;\;\;u_5\in\OO_2^{(-1)}, 
\end{aligned} $$
with the obvious modifications if $k=0$ or $l=0$. Then $f=1-\sum_{j=1}^5 u_j^*u_j + \sum_{j=1}^5 u_j$ 
is an element of group $F$ such that $f\cdot p = \sum_{\alpha\in J\setminus\{\alpha_1,\alpha_2,\alpha_3\}}
P_\alpha$. Thus, starting with $p=1$ and applying this procedure recursively, we can realize every projection 
$p$ as above in the form $f\cdot 1$ for a suitable $f\in F$. 
\end{proof}

%%%%%%%%%%%%%%%%%%%%%%%%%%%%%%%%%%%%%%%%%%%%%%%%%%%%
%%%%%%%%%%%%%%%%%%%%%%%%%%%%%%%%%%%%%%%%%%%%%%%%%%%%

\section{The $F$-equivariant compactification of $\Omega_2$}

Let $\Tf_2$ be an infinite, binary rooted tree, as on the picture below. 

\[ \beginpicture

\setcoordinatesystem units <1cm,1cm>
\setplotarea x from -6 to 6, y from -5 to 0
\put {$\bigstar$} at 0 0
\put {$\bullet$} at -2 -1
\put {$\bullet$} at 2 -1
\put {$\bullet$} at -4 -2
\put {$\bullet$} at 4 -2
\put {$\bullet$} at -2 -2
\put {$\bullet$} at 2 -2
\put {$\bullet$} at -6 -3
\put {$\bullet$} at 6 -3
\put {$\bullet$} at -4.5 -3
\put {$\bullet$} at 4.5 -3
\put {$\bullet$} at -1.5 -3
\put {$\bullet$} at 1.5 -3
\put {$\bullet$} at -3 -3
\put {$\bullet$} at 3 -3
\put {$\vdots$} at -6 -4.2
\put {$\vdots$} at 6 -4.2
\put {$\vdots$} at -4.5 -4.2
\put {$\vdots$} at 4.5 -4.2
\put {$\vdots$} at 3 -4.2
\put {$\vdots$} at -3 -4.2
\put {$\vdots$} at 1.5 -4.2
\put {$\vdots$} at -1,5 -4.2
\put {$\emptyset$} at 0 -0.5
\put {$111$} at -6 -3.5
\put {$112$} at -4.5 -3.5
\put {$121$} at -3 -3.5
\put {$122$} at -1.5 -3.5
\put {$211$} at 1.5 -3.5
\put {$212$} at 3 -3.5
\put {$221$} at 4.5 -3.5
\put {$222$} at 6 -3.5
\put {$11$} at -4.4 -1.7
\put {$22$} at 4.4 -1.7
\put {$1$} at -2.3 -0.8
\put {$2$} at 2.3 -0.8
\put {$12$} at -1.6 -2
\put {$21$} at 1.6 -2
\setlinear
\plot -6 -3  0 0  6 -3 /
\plot -4.5 -3  -4 -2 /
\plot 4.5 -3  4 -2 /
\plot -1.5 -3  -2 -2  -2 -1 /
\plot 1.5 -3  2 -2  2 -1 /
\plot -3 -3  -2 -2 /
\plot 3 -3  2 -2 /

\endpicture \] 
Vertices of $\Tf_2$ are labeled by words from $W_2$. Note that the length of a word equals the distance of the 
corresponding vertex from the root. 

We denote by $\Sigma_2$ the collection of all subtrees of $\Tf_2$ containing the root $\bigstar$ and not 
containing any leaves. That is, if $\omega\in\Sigma_2$ and $\alpha\in\omega$ (the vertex labeled $\alpha$ 
belongs to $\omega$) then at least one of the two successors of $\alpha$ must belong to $\omega$. Let $\Sigma_2^*$ 
denote the set $\Sigma_2$ enlarged by the empty tree. 
There is a natural product topology on $\Sigma_2^*$, making it a compact, metrizable space. 
This space was considered earlier, for example in \cite{Mil}. A sequence 
$(\omega_n)$ of subtrees of $\Tf_2$ converges to $\omega$ if and only if for each $\alpha\in W_2$ we have $\alpha\in\omega\Leftrightarrow 
\alpha\in\omega_n$ for sufficiently large $n$. 

Let $\P(\D_2)$ be the set of all projections in $\D_2$. 
There is a natural embedding of $\P(\D_2)$ into $\Sigma_2^*$. Namely, if $q\in\P(\D_2)$ 
then the corresponding tree (which for simplicity sake will be also denoted $q$) consists of all such vertices 
$\alpha$ that $qP_\alpha\neq0$. Now, we consider $\Omega_2$ as a subset of $\Sigma_2\times\Sigma_2^*$, so that 
\begin{equation}\label{diagonalembed}
\Omega_2\ni q \leftrightarrow (q,1-q)\in\Sigma_2\times\Sigma_2^*. 
\end{equation}
If $q\in\Omega_2$ and  $f\in F$ then we have a nice symmetric form of the action
\begin{equation}\label{actonclosure}
f\cdot(q,1-q) = (f_0 q f_0^* + f_1(1-q)f_1^*,\; f_1 q f_1^* + f_0(1-q)f_0^*). 
\end{equation}

\begin{proposition}\label{continuity}
Let $(q_n)$ be a sequence of elements of $\Omega_2$ that is convergent  in $\sp$. Then for each $f\in F$ 
the sequence $(f\cdot q_n)$ is also convergent in $\sp$.  
\end{proposition}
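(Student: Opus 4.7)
The plan is to reduce convergence of $(f\cdot q_n)$ in $\sp$ to a vertex-by-vertex problem on $\Tf_2$, where it will follow at once from the pointwise nature of convergence in $\sp$. First I would fix a representation $f=\sum_{(\alpha,\beta)\in\J}S_\alpha S_\beta^*$ in which all the words $\beta$ share a common length $k$, so that for every projection $q\in\D_2$ the orthogonality $S_{\beta_i}^*qS_{\beta_j}=0$ for $\beta_i\neq\beta_j$ of equal length yields
$$f_0qf_0^* \;=\; \sum_{(\alpha,\beta)\in\J,\ |\alpha|-k\ \text{even}} S_\alpha(S_\beta^*qS_\beta)S_\alpha^*,$$
together with the analogous formula for $f_1(1-q)f_1^*$. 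Because the $\alpha$'s appearing in $\J$ are pairwise incomparable, every summand is dominated by $P_\alpha$ and the summands across $\J$ are mutually orthogonal, which matches the symmetric formula (\ref{actonclosure}) for the action.

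Next I would describe the tree of the first component of $f\cdot q_n$ one vertex at a time. A direct computation shows that the tree of a single summand $S_\alpha(S_\beta^*qS_\beta)S_\alpha^*$ is empty whenever $\beta\notin q$, and otherwise consists of the path from the root to $\alpha$ together with $\{\alpha\delta : \beta\delta\in q\}$. Summing over the pairs coming from the even-parity part of $\J$, and analogously (with $q$ replaced by $1-q$) over the odd-parity part, one finds that for every $\gamma\in W_2$ the statement ``$\gamma$ lies in the first component of $f\cdot q_n$'' is a finite Boolean combination of atomic statements of the form ``$\mu\in q_n$'' or ``$\mu\in 1-q_n$'', where $\mu$ ranges over a finite set of words depending on $\gamma$ and $f$ alone, and in particular independent of $n$.

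The hypothesis $q_n\to q$ in $\sp$ says exactly that for each fixed $\mu$ each of these atomic statements is eventually constant in $n$. Hence the finite Boolean combination above is eventually constant in $n$, and the membership of every $\gamma\in W_2$ in the first component of $f\cdot q_n$ stabilizes. The argument for the second component is identical after interchanging the even- and odd-parity parts of $\J$. Together these give pointwise, and therefore $\sp$-topological, convergence of the sequence $(f\cdot q_n)$.

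The main technical obstacle is the tree-theoretic description of a single summand $S_\alpha(S_\beta^*qS_\beta)S_\alpha^*$: one must verify that conjugation by $S_\beta^*$ restricts the tree of $q$ to its subtree rooted at $\beta$, while conjugation by $S_\alpha$ transplants that subtree beneath $\alpha$ and adjoins the path from the root down to $\alpha$ (provided the subtree is nonempty). Once this ``transfer of subtrees'' picture is in place, and the incomparability of distinct $\alpha$'s is used to see that each $\gamma\in W_2$ is sensitive to at most one $\alpha$ from either parity, continuity of the action in the product topology on $\sp$ is immediate.
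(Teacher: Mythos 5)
Your argument is correct and follows essentially the same route as the paper's: both reduce the question of whether a fixed vertex belongs to (a component of) $f\cdot q_n$ to the membership of finitely many vertices --- determined by $f$ and the given vertex alone, independently of $n$ --- in $q_n$ or $1-q_n$, and then invoke the vertex-wise stabilization that characterizes convergence in $\Sigma_2\times\Sigma_2^*$. The paper merely packages the finite set of relevant vertices as the decomposition $(f_0^*f_0)(f_0^*P_\alpha f_0)=\sum_\beta P_\beta$ instead of carrying out your explicit subtree-transfer computation, but the content is the same.
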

\begin{proof}
Suppose the sequence $(q_n)$ is convergent in $\sp$.  This is equivalent to saying that for each $\alpha\in W_2$, 
\begin{description}
\item{(i)} if $q_nP_\alpha\neq 0$ for infinitely many $n$ then $q_nP_\alpha\neq 0$ for all but finitely many $n$, and 
\item{(ii)}  if $(1-q_n)P_\alpha\neq 0$ for infinitely many $n$ then $(1-q_n)P_\alpha\neq 0$ for all but finitely many $n$. 
\end{description}
To prove the proposition, we must show that conditions (i) and (ii) hold with $q_n$ replaced by $f\cdot q_n$. We begin 
with (i). Suppose $f\cdot q_nP_\alpha\neq 0$ for infinitely many $n$. Then either $f_0q_nf_0^*P_\alpha\neq 0$ for 
infinitely many $n$ or $f_1(1-q_n)f_1^*P_\alpha\neq 0$ for infinitely many $n$. Suppose the former holds. Then also 
$q_n(f_0^*f_0)(f_0^*P_\alpha f_0)\neq 0$ holds. Let $(f_0^*f_0)(f_0^*P_\alpha f_0)=\sum_\beta P_\beta$. There is a $\beta$ 
such that $q_n P_\beta\neq0$ for infinitely many $n$. But then $q_n P_\beta\neq0$ for all but finitely many $n$, by 
assumption. Hence condition (i) holds for $f\cdot q_n$ in this case. The latter case is verified by an analogous argument, 
and so is condition (ii) for $f\cdot q_n$. 
\end{proof}

Let $\om$ denote the closure of $\Omega_2$ inside $\sp$. 

\begin{definition}\rm 
By virtue of Proposition \ref{continuity}, the action of $F$ on $\Omega_2$ has a unique extension to an action of $F$ on 
$\om$ by homeomorphisms.  
\end{definition}

Indeed, let $\omega\in\om$ and take a sequence $q_n\in\Omega_2$ converging to $\omega$. For an $f\in F$ we put 
$f\cdot\omega:=\lim f\cdot q_n$, knowing that this limit exists by Proposition \ref{continuity}. If $p_n\in\Omega_2$ is another 
sequence converging to $\omega$ then $\lim f\cdot q_n=\lim f\cdot p_n$, since the sequence $h_{2n}:=q_n$, $h_{2n+1}:=p_n$ 
also converges to $\omega$ and thus sequence $f\cdot h_n$ has a limit, again by Proposition \ref{continuity}. This yields a well-defined 
mapping $\om\ni\omega\mapsto f\cdot\omega\in\om$. 

This mapping is continuous. Indeed, let $\omega_n\in\om$ be a sequence converging 
to $\omega$. For each $n$, let $q_m^n\in\Omega_2$ be a sequence converging to $\omega_n$, and let $q_m\in\Omega_2$ be 
a sequence converging to $\omega$.  We have $f\cdot q_m^n\underset{m}{\rightarrow}f\cdot\omega_n$ and 
$f\cdot q_m\underset{m}{\rightarrow}f\cdot\omega$.  Now, equip $\om$ with a metric $d$ compatible with its topology, and let $k_n$ 
be an increasing sequence of natural numbers such that $d(q_{k_n}^n,\omega_n)\to 0$ and $d(f\cdot q_{k_n}^n,f\cdot \omega_n)\to 0$. 
Then also $q_{k_n}^n\to \omega$, and thus $f\cdot q_{k_n}^n\to f\cdot \omega$. We conclude that $f\cdot\omega_n \to f\cdot \omega$. 

Now, for each $f\in F$ we have $f\cdot(f^{-1}\cdot q) = q = f^{-1}\cdot(f\cdot q)$ for all $q\in\Omega_2$. By continuity, we get 
that each mapping $\om\ni\omega\mapsto f\cdot\omega\in\om$ is a homeomorphism with inverse given by the action of $f^{-1}$. Finally, the action 
of $F$ on $\om$ is associative by continuity, since it is associative on the dense subspace $\Omega_2$.

\begin{remark}\label{Lipschitzcompletion}\rm
Obviously, construction of an equivariant compactification depends on a careful choice of topology on $\Omega_2$, 
and some natural topologies do not work properly. 
For example, consider a metric $d_\tau$ on $\Omega_2$, defined as $d_\tau(p,q) := \tau(|p-q|)$.  
For an element $f=\sum_{(\mu,\nu)\in\J}S_\mu S_\nu^*$ of $F$, let 
\begin{equation}\label{height}
\ell(f)=\max\{||\alpha|-|\beta|| \mid (\alpha,\beta)\in\J\}. 
\end{equation}
Then for each $f\in F$ and any two $p,q\in\Omega_2$ we have 
$$ d_\tau(f\cdot p,f\cdot q) = \tau(f_0|p-q|f_0^*) + \tau(f_1|p-q|f_1^*) \leq 2^{\ell(f)+1}
   \tau(|p-q|) = 2^{\ell(f)+1}d_\tau(p,q). $$
Thus, the map $\Omega_2\ni p\mapsto f\cdot p\in\Omega_2$ is Lipschitz with Lipschitz constant $2^{\ell(f)+1}$. 
Hence the action $F\curvearrowright\Omega_2$ extends uniquely to an action of $F$ on the completion 
of $\Omega_2$ w.r.t. metric $d_\tau$. However, this completion is not compact. 

One may also try to equip  $\Omega_2$ with the Vietoris topology, 
or endow $X_2$ with a metric and then put the corresponding Hausdorff distance on $\Omega_2$, 
\cite{Eng}. However, the action of $F$ on $\Omega_2$ is not continuous w.r.t. the Vietoris topology. 
\hfill$\Box$
\end{remark}

For an $\omega\in\Sigma_2$ and a $k\in\Nb$ we denote by $\omega|_k$ the level $k$ restriction of $\omega$. That is, 
$\omega|_k$ is a finite tree such that $\alpha\in\omega|_k$ if and only if $\alpha\in\omega$ and $|\alpha|\leq k$. Also, 
we denote $(\omega,\eta)|_k=(\omega|_k,\eta|_k)$. 

\begin{proposition}\label{boundary}
The following hold true. 
\begin{enumerate}
\item $\Omega_2$ is a discrete and open subspace of $\om$. 
\item Its complement $\bm:=\om\setminus\Omega_2$ is contained in $\Sigma_2\times\Sigma_2$ and homeomorphic to the Cantor space. 
\end{enumerate}
\end{proposition}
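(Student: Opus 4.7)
For part (1), given $q=\sum_{\alpha\in J}P_\alpha \in \Omega_2$ with $J\subseteq W_2^N$, I take the basic cylinder $U_q\subseteq\sp$ of pairs $(\omega,\eta)$ with $\omega|_N = q|_N$ and $\eta|_N = (1-q)|_N$. If $p\in\Omega_2\cap U_q$, matching the level-$N$ tree of $q$ forces $p\geq P_\alpha$ for each $\alpha\in J$, and matching the level-$N$ tree of $1-q$ forces $pP_\alpha=0$ for each $\alpha\in W_2^N\setminus J$; hence $p=q$. Any $(\omega,\eta)\in\om\cap U_q$ is the limit of a sequence in $\Omega_2$ eventually contained in the open set $U_q$, hence eventually equal to $q$, so $U_q\cap\om = \{q\}$, giving both discreteness and openness of $\Omega_2$ in $\om$.

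For part (2), the inclusion $\bm\subseteq\Sigma_2\times\Sigma_2$ is immediate: the only element of $\Omega_2$ with empty second coordinate is $q=1$, and a limit $q_n\to(\omega,\emptyset)$ in $\sp$ forces $(1-q_n)P_\alpha=0$ eventually for every $\alpha$; specializing to $\alpha=1$ and $\alpha=2$ yields $q_n=1$ eventually, so the limit lies in $\Omega_2$ rather than in $\bm$. The space $\bm$ is closed in the compact $\om$, hence compact, and inherits metrizability and total disconnectedness from $\sp$; by Brouwer's characterization of the Cantor space it remains to prove non-emptiness and perfectness. For non-emptiness I exhibit $(W_2,W_2)\in\bm$: the projections $q_m = \sum_{J_m}P_\alpha$ at level $2m+1$ with $J_m$ consisting of all level-$(2m+1)$ words ending in $1$ together with one fixed word ending in $2$ satisfy $|J_m| = 2^{2m}+1\equiv 2\pmod 3$, placing $q_m\in\Omega_2$ by Theorem~\ref{elementsofOmega2}, and a direct check shows that both $q_m$ and $1-q_m$ converge to the full tree $W_2$.

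For perfectness I use two general facts: every $(\omega,\eta)\in\om$ satisfies $\omega\cup\eta=W_2$ (since $P_\alpha = q_nP_\alpha + (1-q_n)P_\alpha\neq 0$ forces $\alpha$ into one of the trees in the limit), and every $(\omega,\eta)\in\bm$ has infinite $C:=\omega\cap\eta$ (otherwise, taking $N$ beyond the deepest vertex of $C$ and repeating the argument of (1) at level $N$ makes the defining sequence eventually constant and in $\Omega_2$, contradicting $(\omega,\eta)\notin\Omega_2$). Given $(\omega,\eta)\in\bm$ and a level $N$, K\"onig's lemma applied to the infinite subtree $C\subseteq W_2$ yields $\gamma\in C$ at some depth $M\geq N+1$, and $0<q_nP_\gamma<P_\gamma$ for all sufficiently large $n$. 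I then build a second sequence $q_n'\in\Omega_2$ by modifying $q_n$ only inside the cylinder $P_\gamma$: a cardinality-preserving swap of two cylinder projections $P_{\alpha_0}, P_{\alpha_1}$ with $\alpha_0,\alpha_1$ descendants of $\gamma$ (one in $J_n$, one not) leaves the $\bmod 3$ congruence on $|J_n|$ from Theorem~\ref{elementsofOmega2} intact; by coordinating these swaps at a fixed sub-depth of $\gamma$ across a subsequence, the modified sequence converges to a boundary point $(\omega',\eta')\in\bm$ that agrees with $(\omega,\eta)$ at all levels $\leq N$ but differs at an intermediate level below $\gamma$.

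The principal obstacle is ensuring the modification actually changes the limit rather than escaping to infinity: a single deep swap typically produces no change at any fixed level. My intended resolution uses the persistent splitting $0<q_nP_\gamma<P_\gamma$ together with the infinitude of $C$ (so infinitely many $\gamma_k$'s are available on a K\"onig ray in $C$) to perform coordinated swaps at a bounded sub-depth relative to the chosen $\gamma$, and then to extract a convergent subsequence via the compactness of $\sp$, yielding the required distinct boundary point $(\omega',\eta')\in\bm$.
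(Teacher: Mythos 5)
Your part~(1) is correct and in fact cleaner than the paper's route: you isolate each $q\in\Omega_2$ directly by the level-$N$ cylinder $U_q$, whereas the paper only shows that $1=(1,0)$ is isolated and then transports this by the (transitive, homeomorphic) $F$-action. Your reduction of part~(2) to Brouwer's characterization, the inclusion $\bm\subseteq\Sigma_2\times\Sigma_2$, the explicit witness $(1,1)$ for non-emptiness, and the two auxiliary facts ($\omega\cup\eta=W_2$ on all of $\om$, and $\omega\cap\eta$ infinite exactly on $\bm$) are all sound and useful.

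The genuine gap is in perfectness, and it is exactly the obstacle you flag yourself. A swap of two level-$L_n$ cylinders $P_{\alpha_0}\leftrightarrow P_{\alpha_1}$ below $\gamma$ changes the trees of $q_n'$ and $1-q_n'$ only along the two rays from $\gamma$ down to $\alpha_0$ and $\alpha_1$, and it produces a change at a \emph{fixed} depth only if either some fixed vertex $\gamma\delta$ is absent from $\omega$ (so that adding $\alpha_1$ below $\gamma\delta$ inserts $\gamma\delta$ into the limit tree) or absent from $\eta$ (symmetrically). For a point such as $(1,1)\in\bm$ --- the pair of full trees, which the paper singles out as the global fixed point --- every vertex below every $\gamma\in C$ already lies in both $\omega$ and $\eta$, so no bounded-depth vertex can be \emph{added} to either tree, and a single swap at depth $L_n\to\infty$ alters nothing at any fixed level; your ``coordinated swaps on a K\"onig ray'' is not specified concretely enough to see how it overcomes this. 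What is needed in that case is a modification that \emph{removes} a fixed vertex from one of the limit trees while keeping $q_n'\in\Omega_2$: for instance, replace $q_nP_\gamma$ by a lexicographically left-justified block of the same trace (which forces $\gamma2\notin\omega'$ or $\gamma1\notin\eta'$ after passing to a subsequence), or follow the paper, which peels off a nested sequence of proper subprojections of a fixed cylinder $P_{\alpha_01}$ to produce infinitely many distinct $q_n\in\Omega_2$ agreeing with $(\omega,\eta)$ at level $k$ but with $q_n|_m$ constant and different from $\omega|_m$; discreteness of $\Omega_2$ then forces every cluster point of such a sequence into $\bm$. That last point is a second, smaller omission in your write-up: you assert the modified sequence converges to a point \emph{of $\bm$}, but nothing in the swap construction rules out the limit falling back into $\Omega_2$; this must be argued either via distinctness of the $q_n'$ (isolated points are not limits of injective sequences) or via your own criterion that $\omega'\cap\eta'$ remains infinite.
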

\begin{proof}
The empty tree $0$ is an isolated point of space $\Sigma_2^*$. Thus the set $\Sigma_2\times\{0\}$ is 
an open and closed neighbourhood of $(1,0)\in\Sigma_2\times\Sigma_2^*$ that separates it from the set
$\{(q,1-q) \mid 1\neq q\in\Omega_2\}$. Hence $1$ is an isolated point in $\om$. Thus for each $f\in F$, point 
$f\cdot 1$ is also isolated in $\om$. Consequently, $\Omega_2$ is discrete and open in $\om$. Thus  
$\bm$ is a closed subspace of $\Sigma_2\times\Sigma_2$. 

To prove that $\bm$ is a Cantor space, it suffices to show the following: for all  $(\omega,\eta)\in\bm$ and all $k\in\Nb$ there exist 
an $m\in\Nb$ and a sequence $(q_n)$ in $\Omega_2$ such that $q_n\neq q_r$ for $n\neq r$, $(q_n,1-q_n)|_k = 
(\omega,\eta)|_k$ and $q_n|_m$ is constant (does not depend on $n$) and different from $\omega|_m$. 
To this end, take an element $(\omega,\eta)\in\bm$ and a $k\in\Nb$. By the very definition of $\bm$, there exists 
a $q_0\in\Omega_2$ such that $(q_0,1-q_0)|_k=(\omega,\eta)|_k$. We may also assume that there exists an $\alpha_0$ 
with $|\alpha_0|>k$ such that for one of its successors, say $\alpha_01$, we have $P_{\alpha_01}\leq q_0$,  
while $P_{\alpha_02}\leq 1-q_0$. Note that if $q'$ is any proper subprojection of $P_{\alpha_01}$ such that 
$q_0-q'\in\Omega_2$ then $q_0|_k = (q_0 - q')|_k$. Now, take a proper subprojection $q_1'$ of $P_{\alpha_01}$ 
such that $q_1:=q_0-q_1'\in\Omega_2\cap\D_2^m$ has the property that $q_1|_m\neq\omega|_m$.   Set $m_1=m$ and 
construct by induction a sequence of mutually disjoint, non-zero, proper subprojections $q_n'$ of $P_{\alpha_01}$ such that 
$q_n:=q_{n-1}-q_n'\in\Omega_2\cap\D_2^{m_n}$, with $m_n>m_{n-1}$, has the property that $q_n|_{m_{n-1}}=
q_{n-1}|_{m_{n-1}}$. Then the sequence $(q_n)_{n=1}^\infty$ of projections in $\Omega_2$ has the required properties. 
\end{proof}

\begin{remark}\label{fibers}\rm 
Let $\pi_1:\Sigma_2\times\Sigma_2\to\Sigma_2$ be the canonical projection onto the first coordinate. It is 
worth noting that the fibers $\pi_1^{-1}(\omega)\cap\bm$ vary dramatically, depending on the choice of $\omega\in\Sigma_2$. 
For example, if $\omega$ is a tree consisting of a single infinite path then $\pi_1^{-1}(\omega)\cap\bm$ has exactly one point, 
namely $(\omega,1)$. On the other hand, $\pi_1^{-1}(1)\cap\bm=\{1\}\times\Sigma_2$.  
\hfill$\Box$
\end{remark}

In closing, we note that element $(1,1)$ of $\bm$ is globally fixed by $F$.

%%%%%%%%%%%%%%%%%%%%%%%%%%%%%%%%%%%%%%%%%%%%%%%%%%%%
%%%%%%%%%%%%%%%%%%%%%%%%%%%%%%%%%%%%%%%%%%%%%%%%%%%%

\medskip\noindent
Jeong Hee Hong \\
Department of Mathematics and Computer Science \\
The University of Southern Denmark \\
Campusvej 55, DK--5230 Odense M, Denmark \\
E-mail: hongjh@imada.sdu.dk \\

\smallskip\noindent
Wojciech Szyma{\'n}ski\\
Department of Mathematics and Computer Science \\
The University of Southern Denmark \\
Campusvej 55, DK--5230 Odense M, Denmark \\
E-mail: szymanski@imada.sdu.dk

\end{document}